\newtheorem{theorem}{Theorem}[section]
\newtheorem{conjecture}[theorem]{Conjecture}
\newtheorem{defn}[theorem]{Definition}
\newtheorem{ndefn}[theorem]{Na\"{i}ve Definition}
\theoremstyle{definition}
\newtheorem{example}[theorem]{Example}
\newtheorem{remark}[theorem]{Remark}
\newcommand{\R}{\ensuremath{\mathbb{R}}}
\newcommand{\C}{\ensuremath{\mathbb{C}}}
\newcommand{\Z}{\ensuremath{\mathbb{Z}}}
\newcommand{\Q}{\mathbb{Q}}
\newcommand{\ord}{\text{ord}}
\newcommand{\trop}{\text{trop}}
\newcommand{\Trop}{\text{Trop}}
\newcommand{\val}{\text{val}}
\newcommand\be{\begin{equation}}
\newcommand\ee{\end{equation}}
\newcommand\bea{\begin{eqnarray}}
\newcommand\eea{\end{eqnarray}}
\newcommand\bi{\begin{itemize}}
\newcommand\ei{\end{itemize}}
\newcommand\ben{\begin{enumerate}}
\newcommand\een{\end{enumerate}}
\newcommand\ba{\begin{array}}
\newcommand\ea{\end{array}}
\newcommand\bpf{\begin{proof}}
\newcommand\epf{\end{proof}}
\newcommand\bex{\begin{exercise}}
\newcommand\eex{\end{exercise}}
\title{Tropical Images of Intersection Points}
\author{Ralph Morrison}
\address{Ralph Morrison\\
Department of Mathematics\\
      University of California\\
      Berkeley, California 94720,
      USA}
\email{ralphmorrison@berkeley.edu}
\begin{document}

\maketitle

\begin{abstract} A key issue in tropical geometry is the lifting of intersection points to
a non-Archimedean field. Here, we ask: ÒWhere can classical intersection
points of planar curves tropicalize to?Ó An answer should have two parts:
first, identifying constraints on the images of classical intersections,
and, second, showing that all tropical configurations satisfying these
constraints can be achieved. This paper provides the first part: images of
intersection points must be linearly equivalent to the stable tropical
intersection by a suitable rational function. Several examples provide
evidence for the conjecture that our constraints may suffice for part two.
\end{abstract}

\section{Introduction}\label{section:intro}

Let $K$ be an algebraically closed non-Archimedean field with a nontrivial valuation $\text{val}:K^*\rightarrow\R$. The examples throughout this paper will use $K=\C\{\!\{t\}\!\}$, the field of Puiseux series over the complex numbers with indeterminate $t$.  This is the algebraic closure of the field of Laurent series over $\C$, and can be defined as
$$\C\{\!\{t\}\!\}=\left\{\sum_{i=k}^\infty a_i t^{i/n}\,:\, a_i\in \C, n,k\in\Z, n>0 \right\}, $$
with $\val\left(\sum_{i=k}^\infty a_i t^{i/n}\right)=k/n$ if $a_k\neq 0$.  In particular, $\val(t)=1$.

The tropicalization map $\trop:(K^*)^n\rightarrow\R^n$ sends points in the $n$-dimensional torus $(K^*)^n$ into Euclidean space under coordinate-wise valuation:
$$\trop:(a_1,\ldots,a_n)\rightarrow(\val(a_1),\ldots,\val(a_n)).$$
In tropical geometry, we consider the tropicalization map on a variety $X\subset (K^*)^n$.  Since the value group is dense in $\R$, we take the Euclidean closure of $\trop(X)$ in $\R^n$, and call this the \emph{tropicalization of $X$}, denoted $\Trop(X)$.  The tropicalization of a variety is a piece-wise linear subset of $\R^n$, and has the structure of a balanced weighted polyhedral complex.  In the case where $X$ is a hypersurface, the combinatorics of the tropicalization can be found from a subdivision of the Newton polytope of $X$.  For more background on tropical geometry, see \cite{Gu} and \cite{MS}.


Consider two curves $X,Y\subset (K^*)^2$ intersecting in a finite number of points. We are interested in the image of the intersection points under tropicalization; that is, in $\Trop(X\cap Y)$ inside of $\Trop(X)\cap\Trop(Y)\subset\R^2$.  It was shown in \cite[Theorem 1.1]{OP} that if $\Trop(X)\cap\Trop(Y)$ is zero dimensional in a neighborhood of a point in the intersection, then that point is in $\Trop(X\cap Y)$.  More generally, they showed this for varieties $X$ and $Y$ under the assumption that $\Trop(X)\cap\Trop(Y)$ has codimension $\text{codim }X+\text{codim }Y$ in a neighborhood of the point.  It follows that if $\Trop(X)\cap\Trop(Y)$ is a finite set, then $\Trop(X\cap Y)=\Trop(X)\cap\Trop(Y)$.

It is possible for $\Trop(X)\cap\Trop(Y)$ to have higher dimensional components, namely finite unions of line segments and rays.  It was shown in \cite{OR} that if $\Trop(X)\cap\Trop(Y)$ is bounded, then each connected component of $\Trop(X)\cap\Trop(Y)$ has the ``right'' number of images of points in $X\cap Y$, counted with multiplicity.  In this context, the ``right'' number is the number of points in the stable tropical intersection of that connected component; the stable tropical intersection is $\lim_{\varepsilon\rightarrow 0}(\Trop(X)+\varepsilon\cdot v)\cap \Trop(Y)$, where $v$ is a generic vector and $\varepsilon$ is a real number \cite[\S4]{OR}.  They further showed that the theorem holds for components of $\Trop(X)\cap\Trop(Y)$ that are unbounded, after a suitable compactification.

We offer the following example to illustrate this higher dimensional component phenomenon. This will motivate the following question:  as we vary $X$ and $Y$ over curves with the same tropicalizations, what are the possibilities for the varying set $\Trop(X\cap Y)$ inside of  the fixed set $\Trop(X)\cap\Trop(Y)$?

\begin{example}\label{motivating_example}

 Let $K=\C\{\!\{t\}\!\}$ and let $f,g\in K[x,y]$ be $f(x,y)=c_1+c_2x+c_3y $ and $g(x,y)=c_4x+c_5xy+tc_6y$, where $c_i\in K$ and  $\val(c_i)=0$ for all $i$.  Let $X,Y\subset (K^*)^2$ be the curves defined by $f$ and $g$, respectively.
 
  \begin{figure}[hbt]
\begin{center}
\includegraphics[
height=1.4in
]
{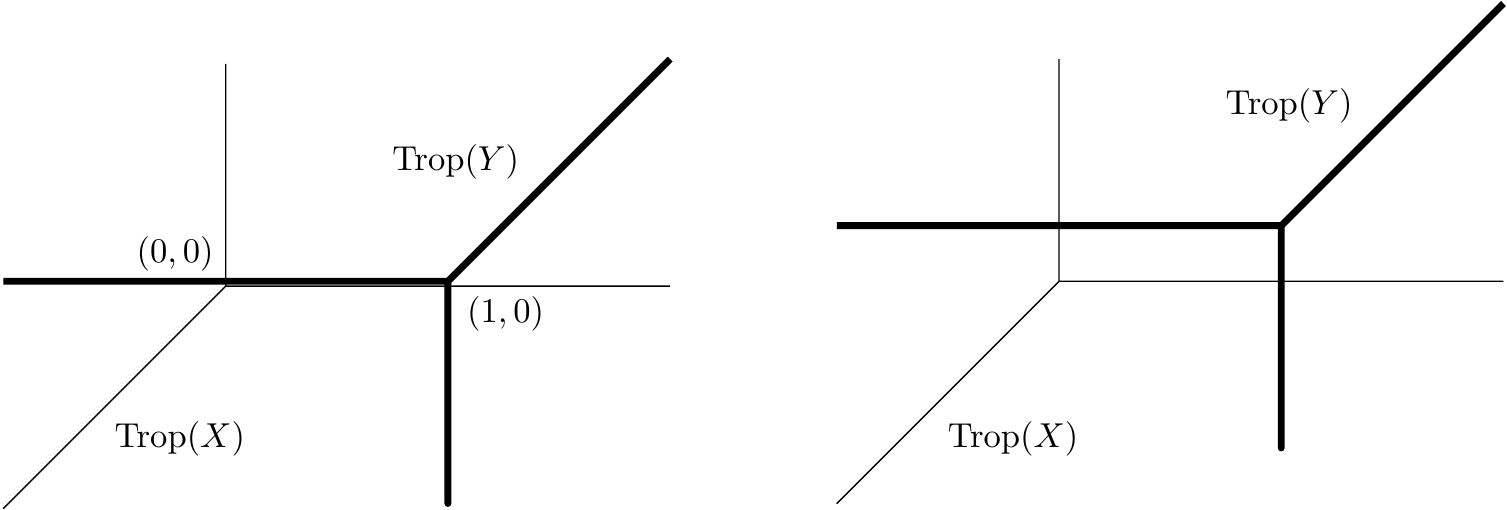}
\end{center}
\caption{ $\Trop(X)$ and $\Trop(Y)$, before and after a small shift of $\Trop(Y)$.}
\label{figure:line_conic}
\end{figure}
 
Regardless of our choice of $c_i$, $\Trop(X)$ and $\Trop(Y)$ will be as pictured in Figure \ref{figure:line_conic}, with $\Trop(X)$ and $\Trop(Y)$ intersecting in the line segment $L$ from $(0,0)$ to $(1,0)$.   However, $X$ and $Y$ only intersect in two points (or one point with multiplicity $2$).  The natural question is: as we vary the coefficients while keeping valuations (and thus tropicalizations) fixed, what are the possible images of the two intersection points within $L$?

 A reasonable guess is that the intersection points map to the stable tropical intersection $\{(0,0),(1,0)\}$, and indeed this does happen for a generic choice of coefficients.  However, as shown in Example \ref{motivating_example_detailed}, one can choose coefficients such that the intersection points map to any pair of points in $L$ of the form $(r,0)$ and $(1-r,0)$, where $0\leq r\leq \frac{1}{2}$.  These possible configurations are illustrated in Figure \ref{figure:line_conic_configurations}.  
 
  \begin{figure}[hbt]
\begin{center}
\includegraphics[
width=4.5in
]
{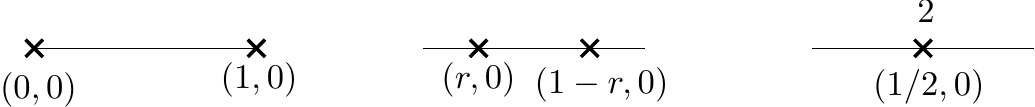}
\end{center}
\caption{Possible images of $X\cap Y$ in $\Trop(X)\cap\Trop(Y)$.}
\label{figure:line_conic_configurations}
\end{figure}
\end{example}

The main result of this paper is that the points $\Trop(X\cap Y)$ inside of $\Trop(X)\cap\Trop(Y)$ must be linearly equivalent to the stable tropical intersection via particular \emph{tropical rational functions}, defined in Section \ref{section:background}. To distinguish tropical rational functions from classical rational functions, they will be written as $f^{\trop}$, $g^{\trop}$, or $h^{\trop}$ instead of $f$, $g$, or $h$. See \cite{BN} and  \cite{GK} for more background.  In all the examples discussed in Section \ref{section:examples}, essentially every such configuration is achievable.  Conjecture \ref{main_conjecture} expresses our hope that this always holds.

\begin{theorem}\label{main_theorem}  Let $X,Y\subset (K^*)^2$ where $X\cap Y$ is equal to the multiset $\{p_1,\ldots,p_n\}$ and where $\Trop(X)$ is smooth.  Let $E$ be the stable intersection divisor of $\Trop(X)$ and $\Trop(Y)$, and let $D$ be
 $$D=\sum_i\trop(p_i).$$
 Then there exists a tropical rational function $h^{\trop}$ on $\Trop(X)$ such that $(h^{\trop})=D-E$ and $\text{supp}(h^{\trop})\subset \Trop(X)\cap\Trop(Y)$.
 \end{theorem}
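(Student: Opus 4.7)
The plan is to realize $h^{\trop}$ as the tropical ratio of two naturally occurring piecewise-linear functions on $\Trop(X)$ that are both attached to a Laurent polynomial $g\in K[x^{\pm 1},y^{\pm 1}]$ cutting out $Y$ in the torus. Writing $g=\sum_\alpha c_\alpha x^\alpha$, let $g^{\trop}(\omega)=\min_\alpha\bigl(\val(c_\alpha)+\langle \alpha,\omega\rangle\bigr)$ be the associated tropical polynomial. Restricting $g^{\trop}$ to $\Trop(X)$ produces a tropical rational function whose divisor on $\Trop(X)$ is exactly the stable tropical intersection $E$; this is the standard characterization of stable intersection as the divisor of a tropical polynomial on a smooth tropical curve, and it applies cleanly here because $\Trop(X)$ is smooth (cf.\ \cite{OR}, \cite{MS}).

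The second function captures the \emph{classical} divisor on $X$. Define $F\colon \Trop(X)\to \R$ by $F(\omega)=\val(g(x))$ for a generic lift $x\in X$ with $\trop(x)=\omega$; this is well defined on a dense open subset of $\Trop(X)$ and extends continuously to a piecewise-linear function. A tropical Poincar\'e--Lelong formula, applied to the rational function $g|_X$ on a smooth projective model of $X$, identifies the divisor $(F)$ on $\Trop(X)$ with the retraction of $\text{div}(g|_X)$ to $\Trop(X)$. Inside the torus one has $X\cap V(g)\cap (K^*)^2=\{p_1,\dots,p_n\}$, so the interior part of the divisor of $F$ is exactly $D$; any remaining contribution comes from the toric boundary and shows up only as slopes of $F$ along the unbounded rays of $\Trop(X)$. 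Away from $\Trop(Y)$ the tropical polynomial $g^{\trop}$ is locally given by a single monomial $\val(c_\alpha)+\langle\alpha,\omega\rangle$, and a generic lift $x$ over such $\omega$ satisfies $\val(g(x))=\val(c_\alpha)+\langle\alpha,\omega\rangle$, so $F$ and $g^{\trop}|_{\Trop(X)}$ agree on $\Trop(X)\setminus \Trop(Y)$ up to the same affine pieces.

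Setting $h^{\trop}=F-g^{\trop}|_{\Trop(X)}$ therefore gives a well-defined tropical rational function on $\Trop(X)$: the monomial argument above, applied along each unbounded ray, shows that the slopes at infinity of the two terms coincide and cancel. Its divisor is $(F)-(g^{\trop}|_{\Trop(X)})=D-E$. The support condition $\text{supp}(h^{\trop})\subset \Trop(X)\cap\Trop(Y)$ follows from the same local observation: on each connected component of $\Trop(X)\setminus \Trop(Y)$, both $F$ and $g^{\trop}|_{\Trop(X)}$ are locally affine with matching slopes, so their difference is locally affine there. The main obstacle to making this rigorous is the boundary bookkeeping in the Poincar\'e--Lelong step: one must choose a smooth projective model of $X$ (say inside the toric compactification determined by the Newton polytope of a defining equation for $X$), and verify that the divisor contributions of $g|_X$ at the toric-boundary points really do match the slopes of $g^{\trop}$ on the unbounded rays of $\Trop(X)$. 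Smoothness of $\Trop(X)$ is exactly what makes this matching tractable, since smooth tropical plane curves have trivalent vertices at which local intersection computations reduce to recording the unique monomial of $g$ realizing $g^{\trop}$ in each incident direction.
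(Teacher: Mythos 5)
Your construction is, at bottom, the same as the paper's. The paper's first proof tropicalizes the classical rational function $h=g/g'$ on $X$, where $g'$ is a generic polynomial with the same tropical polynomial as $g$; since $\trop(g'|_X)$ coincides with $g^{\trop}|_{\Trop(X)}$ for generic $g'$, the resulting function $\trop(g|_X)-\trop(g'|_X)$ is exactly your $F-g^{\trop}|_{\Trop(X)}$. The paper's second proof, via tropical modifications, is closer still: the layers $\ell_i$ there are the graph of your $F$, and the function written down is $\sum_i(\ell_i-g'_{\trop})$. What your packaging buys is the elimination of the auxiliary curve $Y'$, at the price of importing two external facts --- that $(g^{\trop}|_{\Trop(X)})=E$ and a Poincar\'e--Lelong formula for $F$ --- where the paper reads off both divisors at once from the zeros and poles of the single classical function $g/g'$ (the genericity of $g'$ plays the role of your stable-intersection lemma, and the boundary bookkeeping you worry about largely cancels because $g$ and $g'$ have the same Newton polygon).

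The one genuine soft spot is your definition of $F$. The locus where $\val(g(x))$ is independent of the lift $x$ is $\Trop(X)\setminus\Trop(Y)$, and in precisely the situation the theorem is about --- $\Trop(X)\cap\Trop(Y)$ containing one-dimensional components --- that set is \emph{not} dense in the curve $\Trop(X)$, so ``well defined on a dense open subset and extends continuously'' does not produce a function on the interiors of those segments. A ``generic lift'' value does exist there, but making that precise is exactly the content of the paper's Section 2: one needs a section $s$ of $X^{an}\to\Trop(X)$ and sets $F(w)=\log|g|_{s(w)}$. This, rather than the matching of boundary slopes that you flag as the main obstacle, is where the smoothness of $\Trop(X)$ actually enters (it guarantees the section exists, per \cite{BPR}). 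With that repair --- or by passing to the graph of $g$ over $X$ as in the paper's modification argument, which sidesteps the issue and drops the smoothness hypothesis --- your argument goes through.
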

 
 We will present two proofs of this theorem.  In Sections \ref{section:background} and \ref{section:main_result} we approach the question from the perspective of Berkovich theory, which in the smooth case allows us to tropicalize rational functions on classical curves.  In Section \ref{section:modifications} we present an alternate argument using tropical modifications, which allows us to drop the smoothness assumption.
 
\begin{example}\label{motivating_example_extended}
Let $X$ and $Y$ be as in Example \ref{motivating_example}. We will consider tropical rational functions on $\Trop(X)\cap \Trop(Y)$ such that
\begin{itemize}
\item[(i)] the stable intersection points are the poles (possibly canceling with zeros), and
\item[(ii)]  the tropical rational function takes on the same value at every boundary point of $\Trop(X)\cap\Trop(Y)$.
\end{itemize}
If we insist that the ``same value'' in condition (ii) is $0$, we may extend these tropical rational functions to all of $\Trop(X)$ by setting them equal to $0$ on $\Trop(X)\setminus \Trop(Y)$.  This yields tropical rational functions on $\Trop(X)$ with  $\text{supp}(h^{\trop})\subset \Trop(X)\cap\Trop(Y)$, as in Theorem \ref{main_theorem}.  Instances of the types of such tropical rational functions on $L=\Trop(X)\cap\Trop(Y)$ from our example are illustrated in Figure \ref{figure:line_conic_rational_functions}.  

  \begin{figure}[hbt]
\begin{center}
\includegraphics[
width=4.5in
]
{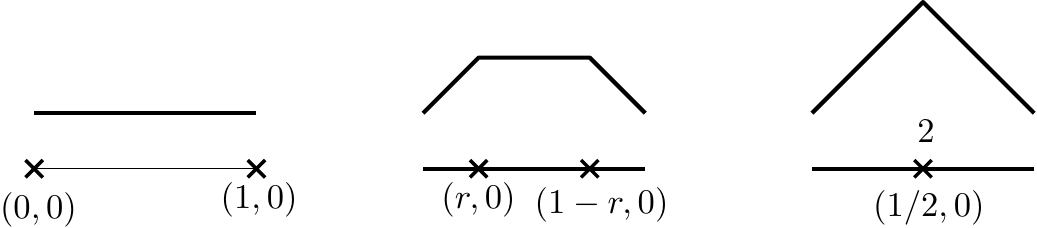}
\end{center}
\caption{Graphs of tropical rational functions  on $\Trop(X)\cap\Trop(Y)$.}
\label{figure:line_conic_rational_functions}
\end{figure}

As asserted by Theorem \ref{main_theorem}, all possible image intersection sets in $\Trop(X)\cap\Trop(Y)$ arise as the zero set of such a tropical rational function.  Equivalently, the stable intersection divisor and the image of intersection divisor are linearly equivalent via one of these functions.  
\end{example}

\begin{remark}It is not quite the case that the zero set of every such tropical rational function (from Example \ref{motivating_example_extended}) is attainable as the image of the intersections of $X$ and $Y$ (with changed coefficients).  For instance, such a tropical rational function could have zeros at $(\frac{\sqrt{2}}{2},0)$ and $(1-\frac{\sqrt{2}}{2},0)$, which cannot be the images of \emph{any} points on $X$ and $Y$ since they have irrational coordinates.  However, if we insist that the tropical rational functions have zeros at points with rational coefficients (since $\Q=\val(K^*)$), all zero sets can be achieved as the images of intersections.  This is the content of Conjecture \ref{main_conjecture}.
\end{remark}

\subsection*{Acknowledgements}  The author would like to thank Matt Baker and Bernd Sturmfels for introducing him to these questions in tropical geometry.  The author would also like to thank Sarah Brodsky, Melody Chan, Nikita Kalinin, Kristin Shaw, and Josephine Yu for helpful conversations and insights.  The author was supported by the NSF through grant DMS-0968882 and a graduate research fellowship, and by the Max Planck Institute for Mathematics in Bonn.

\section{Tropicalizations of Rational Functions}\label{section:background}

In this section we present background information on tropical rational function theory, and use some Berkovich theory to define the tropicalization of a rational function.  For the theory of tropical rational functions, we consider abstract tropical curves $\Gamma$, which are weighted metric graphs with finitely many edges and vertices, where the edges have possibly infinite lengths.  See \cite{BPR} for background on Berkovich spaces, and \cite{Mi} for more background on tropical rational functions.

Tropical rational functions on tropical curves are analogous to classical rational functions on classical curves. A \emph{divisor} on a tropical curve $\Gamma$ is a finite formal sum of points in $\Gamma$ with coefficients in $\Z$.  If $D=\sum_ia_iP_i$, the \emph{degree} of $D$ is $\deg D:=\sum_ia_i$.  The \emph{support} of $D$ is the set of all points $P_i$ with $a_i\neq 0$, and $D$ is called \emph{effective} if all $a_i$'s are nonnegative.  

\begin{defn}\rm{  A \emph{rational function} on a tropical curve $\Gamma$ is a continuous function $f^{\trop}:\Gamma\rightarrow \R\cup\{\pm\infty\}$ such that the restriction of $f^{\trop}$ to any edge of $\Gamma$ is a piecewise linear function with integer slopes and only finitely many pieces.  This means that $f^{\trop}$ can only take on the values of $\pm\infty$ at the unbounded ends of $\Gamma$.  The \emph{associated divisor} of $f^{\trop}$ is $(f^{\trop})=\sum_{P\in\Gamma}\ord_P(f^{\trop})\cdot P$}, where $\ord_P(f^{\trop})$ is minus the sum of the outgoing slopes of $f$ at a point $P$.   If $D$ and $E$ are divisors such that $D-E=(f^{\trop})$ for some tropical rational function $f$, we say that $D$ and $E$ are \emph{linearly equivalent}.
\end{defn}

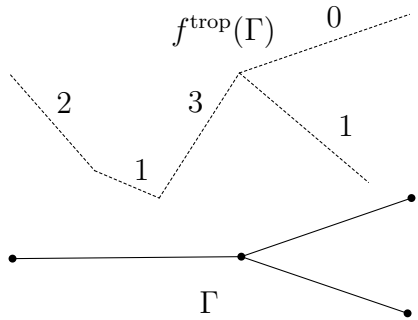
\begin{figure}[hbt]
\begin{tikzpicture}[line cap=round,line join=round,>=triangle 45,x=1.3cm,y=1.3cm]
\clip(3.64,5.38) rectangle (8.89,9.57);
\draw (4,6.48)-- (6.34,6.5);
\draw (6.34,6.5)-- (8.04,5.92);
\draw (6.34,6.5)-- (8.08,7.1);
\draw [dash pattern=on 1pt off 1pt] (3.98,8.36)-- (4.84,7.38);
\draw [dash pattern=on 1pt off 1pt] (4.84,7.38)-- (5.5,7.1);
\draw [dash pattern=on 1pt off 1pt] (5.5,7.1)-- (6.32,8.38);
\draw [dash pattern=on 1pt off 1pt] (6.32,8.38)-- (7.64,7.26);
\draw [dash pattern=on 1pt off 1pt] (6.32,8.38)-- (8.06,8.98);
\draw (4.34,8.29) node[anchor=north west] {$ 2 $};
\draw (5.13,7.6) node[anchor=north west] {$ 1$};
\draw (5.68,8.3) node[anchor=north west] {$ 3 $};
\draw (7.1,9.17) node[anchor=north west] {$ 0 $};
\draw (7.22,8.06) node[anchor=north west] {$ 1 $};
\draw (5.8,6.26) node[anchor=north west] {$ \Gamma $};
\draw (5.51,9.08) node[anchor=north west] {$ f^{\trop}(\Gamma) $};
\begin{scriptsize}
\fill [color=black] (4,6.48) circle (1.5pt);
\fill [color=black] (6.34,6.5) circle (1.5pt);
\fill [color=black] (8.04,5.92) circle (1.5pt);
\fill [color=black] (8.08,7.1) circle (1.5pt);
\end{scriptsize}
\end{tikzpicture}
\caption{The graph of a rational function $f^{\trop}$ on an abstract tropical curve $\Gamma$.}
\label{rationalfunction}
\end{figure}

As an example, consider Figure \ref{rationalfunction}.  Here $\Gamma$ consists of four vertices and three edges arranged in a Y-shape, and the image of $\Gamma$ under a rational function $f$ is illustrated lying above it.  The leftmost vertex is a zero of order $2$, since there is an outgoing slope of $-2$ and no other outgoing slopes.  The next kink in the graph is a pole of order $1$, since  the outgoing slopes are $2$ and $-1$ and $2+(-1)=1$.  Moving along in this direction we have a pole of order $4$, a zero of order $4$, at one endpoint a pole of order $1$, and at the other endpoint no zeros or poles. Note that, counting multiplicity, there are six zeros and six poles.  The numbers agree, as in the classical case.

Since we can tropicalize a curve to obtain a tropical curve, we would like to tropicalize a rational function on a curve and obtain a tropical rational function on a tropical curve.  A na\"{i}ve definition of ``tropicalizing a rational function'' would be as follows.

\begin{ndefn}Let $h$ be a rational function on a curve $X$.  Define the \emph{tropicalization of $h$}, denoted $\trop(h)$, as follows.  For every point $w$ in the image of $X\setminus\{\text{zeros and poles of $h$}\}$ under tropicalization, lift that point to $p\in X$, and define
$$\trop(h)(w)=\val(h( p)).$$
Extend this function to all of $\Trop(X)$ by continuity.
\end{ndefn}

Unfortunately this is not quite well-defined, because $\val(h( p))$ depends on which lift $p$ of $w$ we choose.  However, as suggested to the author by Matt Baker, this definition can be made rigorous if at least one of the tropicalizations is suitably faithful in a Berkovich sense. Let $h$ be a rational function on $X$, and assume that there is a canonical section $s$ to the map $X^{an}\rightarrow \Trop(X)$, where $X^{an}$ is the analytification of $X$.  For $w\in \Trop(X)$,  define
$$\trop(h)(w)=\log|h|_{s(w)},$$
where $|\cdot|_{s(w)}$ is the seminorm corresponding to the point $s(w)$ in $X^{an}$.  This rational function has the desired properties.

\begin{remark}\label{remark_suitably_faithful}  In \cite{BPR} one can find conditions to guarantee that there exists a canonical section $s$ to the map $X^{an}\rightarrow \Trop(X)$.  For instance,  if $\Trop(X)$ is smooth in the sense that it comes from a unimodular triangulation of its Newton polygon, such a section will exist.
\end{remark}

\section{Main Result and a Conjecture}\label{section:main_result}

We are ready to prove Theorem \ref{main_theorem}.

\begin{proof}[Proof of Theorem \ref{main_theorem}]  Let $f$ and $g$ be the defining equations of $X$ and $Y$, respectively. Let $g'\in K[x,y]$ have the same tropical polynomial as $g$, and let $Y'$ be the curve defined by $g'$.  We have that $\Trop(Y)=\Trop(Y')$, and for generic $g'$ we have that $\Trop(X\cap Y')$ is the stable tropical intersection of $\Trop(X)$ and $\Trop(Y)$.

  Recall that $p_1,\ldots,p_n$ denote the intersection points of $X$ and $Y$, possibly with repeats. Let $p'_1,\ldots,p'_m$ denote the intersection points of $X$ and $Y'$, with duplicates in the case of multiplicity.  Note that $m$ and $n$ will be equal unless $X$ and $Y$ have intersection points outside of $(K^*)^2$; this is discussed in Remark \ref{intersections_at_infinity}.

Consider the rational function $h=\frac{g}{g'}$ on $X$, which has zeros at the intersection points of $X$ and $Y$ and poles at the intersection points of $X$ and $Y'$.  Since $\Trop(X)$ is smooth, by Remark \ref{remark_suitably_faithful} we may tropicalize $h$.  This gives a tropical rational function $\trop(h)$ on $\Trop(X)$ with divisor
$$(\trop(h))=\trop(p_1)+\ldots+\trop(p_n)-\trop(p'_1)-\ldots-\trop(p'_m)=D-E.$$

We claim that $\trop(h)$ is the desired $h^{trop}$ from the statement of the theorem.  All that remains to show is that $\text{supp}(\trop(h))\subset \Trop(X)\cap\Trop(Y)$.   If $w\in \Trop(X)\setminus \Trop(Y)$, then $|g|_{s(w)}=|g'|_{s(w)}$ because $g$ and $g'$ both have bend locus $\Trop(Y)$, and $w$ is away from $\Trop(Y)$.  This means that $\trop(h)(w)=|h|_{s(w)}=|g|_{s(w)}-|g'|_{s(w)}=0$ on $\Trop(X)\setminus \Trop(Y)$.  This completes the proof.  
 \end{proof}

\begin{remark}  The argument and result will hold even if $\Trop(X)$ is not smooth as long as there exists a section $s$ to $X^{an}\rightarrow \Trop(X)$.
\end{remark}

\begin{remark}  Since we have our result in terms of  linear equivalence, we get as a corollary that the configurations of points differ by a sequence of chip firing moves by \cite{HMY}.
\end{remark}

\begin{remark}\label{intersections_at_infinity} If $\Trop(X)\cap\Trop(Y)$ is unbounded (for instance, if $\Trop(X)=\Trop(Y)$), then it is possible to have zeros of the rational function ``at infinity.''  This is OK, and can be made sense of using a compactifying fan as in \cite[\S3]{OR}.  See Example \ref{doubleline} for an instance of this phenomenon.
\end{remark}

Our theorem has placed a constraint on the configurations of intersection points mapping into tropicalizations.  The following conjecture posits that essentially all these configurations are attainable.

\begin{conjecture}\label{main_conjecture}  Assume we are given $\Trop(X)$ and $\Trop(Y)$ and a tropical rational function $h^{\trop}$ on $\Trop(X)$  with simple poles precisely at the stable tropical intersection points and zeros in some configuration (possibly canceling some of the poles) with coordinates in the value group ($\Q$ for $\C\{\{t\}\}$), such that $\text{supp}(h^{\trop})\subset \Trop(X)\cap\Trop(Y)$.  Then it is possible to find $X$ and $Y$ with the given tropicalizations such that $\trop(p_1),\ldots,\trop(p_n)$ are the zeros of $h^{\trop}$.
\end{conjecture}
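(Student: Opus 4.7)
The plan is to fix $X$ with the given tropicalization and realize the prescribed configuration by deforming a generic curve $Y_0$. Parameterize curves $Y'$ with $\Trop(Y') = \Trop(Y)$ by the moduli space $\mathcal{M}$ consisting of tuples of leading coefficients $\gamma_\alpha \in \C^*$ indexed by the lattice points $\alpha$ in the Newton polygon of the defining polynomial $g$. For each $g \in \mathcal{M}$ and each intersection point $p \in X \cap V(g)$, the tropical image $\trop(p)$ is determined algebraically by the $\gamma_\alpha$, so we obtain a map $\Phi: \mathcal{M} \to \text{Div}(\Trop(X))$ sending $g \mapsto \sum_p \trop(p)$. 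By Theorem \ref{main_theorem}, $\Phi(g) - E = (h_g^{\trop})$ for some tropical rational function $h_g^{\trop}$ supported in $\Trop(X) \cap \Trop(Y)$, and the conjecture asserts that every admissible $h^{\trop}$ is realized as $h_g^{\trop}$ for some $g \in \mathcal{M}$.

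The approach has two main steps. First, reduce to the case of a single bounded connected component $C$ of $\Trop(X) \cap \Trop(Y)$, since the argument on distinct components can be carried out independently. Second, realize chip-firing moves on $C$ by perturbations of individual leading coefficients $\gamma_\alpha$. By \cite{HMY}, as used in the remark after the proof of Theorem \ref{main_theorem}, any divisor $D$ linearly equivalent to $E$ by a function supported in $C$ can be reached from $E$ by a sequence of elementary chip-firing moves along $C$, and the rationality hypothesis on the zeros of $h^{\trop}$ guarantees that all intermediate divisors have coordinates in $\Q = \val(K^*)$. The work is then to show that each elementary chip-firing move can be implemented by a specific one-parameter perturbation of the coefficients of $g$: intuitively, multiplying a single $\gamma_\alpha$ by $t^\varepsilon$ for a real parameter $\varepsilon$ should shift certain intersection points along the edges of $\Trop(Y)$ dual to cells incident to $\alpha$.

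A potentially simplifying device is the tropical modification machinery of Section \ref{section:modifications}: after modifying $\Trop(X)$ along the graph of $h^{\trop}$, the target divisor becomes the stable tropical intersection in the modified configuration, so realizability reduces to producing a classical curve whose tropicalization equals a prescribed smooth tropical curve on the modification. This reframing may let one invoke standard realizability results for stable intersections, combined with the unimodular triangulation hypothesis used in Remark \ref{remark_suitably_faithful}, rather than arguing directly on $\Trop(X)$.

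The main obstacle is global consistency: a perturbation of a single $\gamma_\alpha$ simultaneously affects initial forms on every edge of $\Trop(Y)$ dual to a cell containing $\alpha$, and one must verify that the net effect on $\Phi(g)$ truly is a single chip-firing move rather than a more intricate reconfiguration that also pushes intersection points off $\Trop(X) \cap \Trop(Y)$ (where Theorem \ref{main_theorem} would then force compensating cancellations). Controlling this requires an explicit analysis of the resultants of the initial forms on each edge, generalizing the direct calculation carried out in Example \ref{motivating_example_detailed}. I expect that the hardest step will be handling chip-firing moves that cross vertices of $\Trop(X) \cap \Trop(Y)$, where the combinatorial type of the intersection configuration changes and one must ensure no intersection point escapes the intended support.
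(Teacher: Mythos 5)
The statement you are addressing is Conjecture~\ref{main_conjecture}, which the paper does \emph{not} prove: it offers only a ``Proof Strategy'' (build the polyhedral complex of admissible configurations, realize its vertices, then deform along edges, faces, and so on) together with worked examples in Section~\ref{section:examples} as evidence. Your proposal is likewise a strategy rather than a proof, and it contains genuine gaps --- indeed, you name the central one yourself. The claim that each elementary chip-firing move on a component of $\Trop(X)\cap\Trop(Y)$ can be implemented by a one-parameter perturbation of a single leading coefficient $\gamma_\alpha$ is exactly the hard content of the conjecture, and nothing in your argument establishes it: a perturbation of one coefficient changes the initial forms on every cell of the dual subdivision containing $\alpha$, and you give no mechanism for controlling the resulting motion of \emph{all} intersection points (including those on other connected components, which undercuts your proposed reduction to a single component). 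The appeal to \cite{HMY} only decomposes the target linear equivalence into chip-firing steps on the metric graph; it does not produce curves realizing the intermediate divisors, nor does it guarantee that those intermediate divisors remain supported in $\Trop(X)\cap\Trop(Y)$ in a way compatible with Theorem~\ref{main_theorem}. The tropical-modification reframing has the same difficulty in disguise: one must realize a prescribed tropical curve on the modification as the tropicalization of $g(X)$ for the \emph{fixed} curve $X$, which is not a standard realizability statement.

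For comparison, the paper's own outline organizes the deformation by the polyhedral structure of the configuration space $M$ (achieve vertices, then slide along edges as in Example~\ref{example_cc} with the family $g_{AC,r}$), whereas you organize it by chip-firing moves and coefficient perturbations; these are close in spirit, and your explicit use of \cite{HMY} and of the Section~\ref{section:modifications} machinery are reasonable additional ideas. But in both cases every substantive step is left as future work, so neither constitutes a proof of the statement.
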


\bpf[Proof Strategy]  We will consider the space of all configurations of zeros of rational functions on $\Trop(X)\cap\Trop(Y)$ satisfying the given properties.  This will form a polyhedral complex.  

\bi

\item  First, we will prove that we can achieve the configurations corresponding to the vertices of this complex.

\item  Next, let $E$ be an edge connecting $V$ and $V'$, where the configuration given by $V$ is achieved by $X$ and $Y$ and the configuration given by $V'$ is achieved by $X'$ and $Y'$.  We will prove that we can achieve any configuration along the edge by somehow deforming $(X,Y)$ to $(X',Y')$.  This will show that all points on edges of the complex correspond to achievable configurations.

\item  We will continue this process (vertices give edges, edges give faces, etc.) to show that all points in the complex correspond to achievable configurations.
\ei
\epf
For an illustration of this process, see Example \ref{example_cc} and Figure \ref{mscc}.

\section{Tropical Modifactions}\label{section:modifications}

In this section we outline an alternate proof to Theorem \ref{main_theorem} using tropical modifications.  See \cite[\textsection 4]{BL} for background on this subject.

\begin{proof}[Outline of proof of Theorem \ref{main_theorem} using tropical modifications]
Let $X$, $Y$, $f$, $g$, $g'$, $D$, and $E$ be as in the proof from Section \ref{section:main_result}.  Let $g_{\trop}$  and $g'_{\trop}$ be the tropical polynomials defined by $g$ and $g'$, respectively.

Let $g(X)\subset (K^*)^2\times K$ be the curve that is the closure of $\{(p,g(p )\,|\,p\in X\}$.  Its tropicalization $\Trop(g(X))$ is contained in the tropical hypersurface in $\mathbb{R}^3$ determined by the polynomial $z=g_{\trop}$, and projects onto $\Trop(X)$.  Call this projection $\pi$.  Note that outside of $\Trop(Y)$, $\pi$ is one-to-one, and $\Trop(g(X))$ agrees with $\Trop(g'(X))$.

By \cite[Lemma 4.4]{BL}, the infinite vertical rays in $\pi^{-1}(\Trop(X)\cap\Trop(Y))$ correspond to the intersection points of $X$ and $Y$, and so lie above the support of the divisor $D$ on $\Trop(X)$.  Delete the vertical rays from $\pi^{-1}(\Trop(X)\cap\Trop(Y))$, and decompose the remaining line segments into one or more layer, where each layer gives the graph of a piecewise linear function on $\Trop(X)\cap\Trop(Y)$.  (If deleting the vertical rays makes $\pi$ a bijection, there will be only one layer.)  Call these piecewise linear functions $\ell_1,\ldots,\ell_k$.  The tropical rational function
$$h^{\trop}=\sum_{i=1}^k(\ell_i-g'_{\trop}) $$
has value $0$ outside of $\Trop(X)\cap\Trop(Y)$ because of the agreement of $\Trop(g(X))$ and $\Trop(g'(X))$, and has divisor $D-E$.  
\end{proof}

This argument gives us a slightly stronger version of Theorem \ref{main_theorem}, in that it does not require the assumption of smoothness on $X$.

\section{Evidence for Conjecture \ref{main_conjecture}}\label{section:examples}

In these examples we consider curves $X$ and $Y$ over the field of Puiseux series $\mathbb{C}\{\{t\}\}$.

\begin{example}\label{motivating_example_detailed}  Let $f$ and $g$ be as in Example \ref{motivating_example}.  Treating them as elements of $(K[x])[y]$, their resultant is
$$-c_2c_5x^2 +(c_3c_4 -c_1c_5- tc_2c_6)x - tc_1c_6$$
The two roots of this quadratic polynomial in $x$, which are the $x$-coordinates of the two points in $X\cap Y$, have valuations equal to the slopes of the Newton polygon.  Generically the valuations of the coefficients are $0$, $0$, and $1$, giving slopes $0$ and $1$.  For any rational number $r>0$ we may choose $c_1=1-t^r-t$ and all other $c_i=1$, giving $\val(c_3c_4 -c_1c_5- tc_2c_6)=\val(t^r)=r$.  If $r\leq\frac{1}{2}$ this gives slopes of $r$ and $1-r$, and if $r\geq\frac{1}{2}$ this gives two slopes of $\frac{1}{2}$.  These cases are illustrated in Figure \ref{figure:line_conic_configurations} and correspond to rational functions illustrated in Figure \ref{figure:line_conic_rational_functions}. This means all possible images of intersections allowed by Theorem \ref{main_theorem} with rational coordinates are achievable, so Conjecture \ref{main_conjecture} holds for this example.
\end{example}

\begin{example}\label{example_cc}
Consider conic curves $X$ and $Y$ given by the polynomials $f(x,y)=c_1x+c_2y+c_3xy=0\}$ and $g(x,y)=c_4x+c_5y+c_6xy+t(c_7x^2+c_8y^2+c_9)=0$, where $\val(c_i)=0$ for all $i$. The tropicalizations of $X$ and $Y$ are shown in Figure \ref{cc}, and intersect in three line segments joined at a point.
 
 \begin{figure}[hbt]
\begin{center}
\includegraphics[
height=2.4in
]
{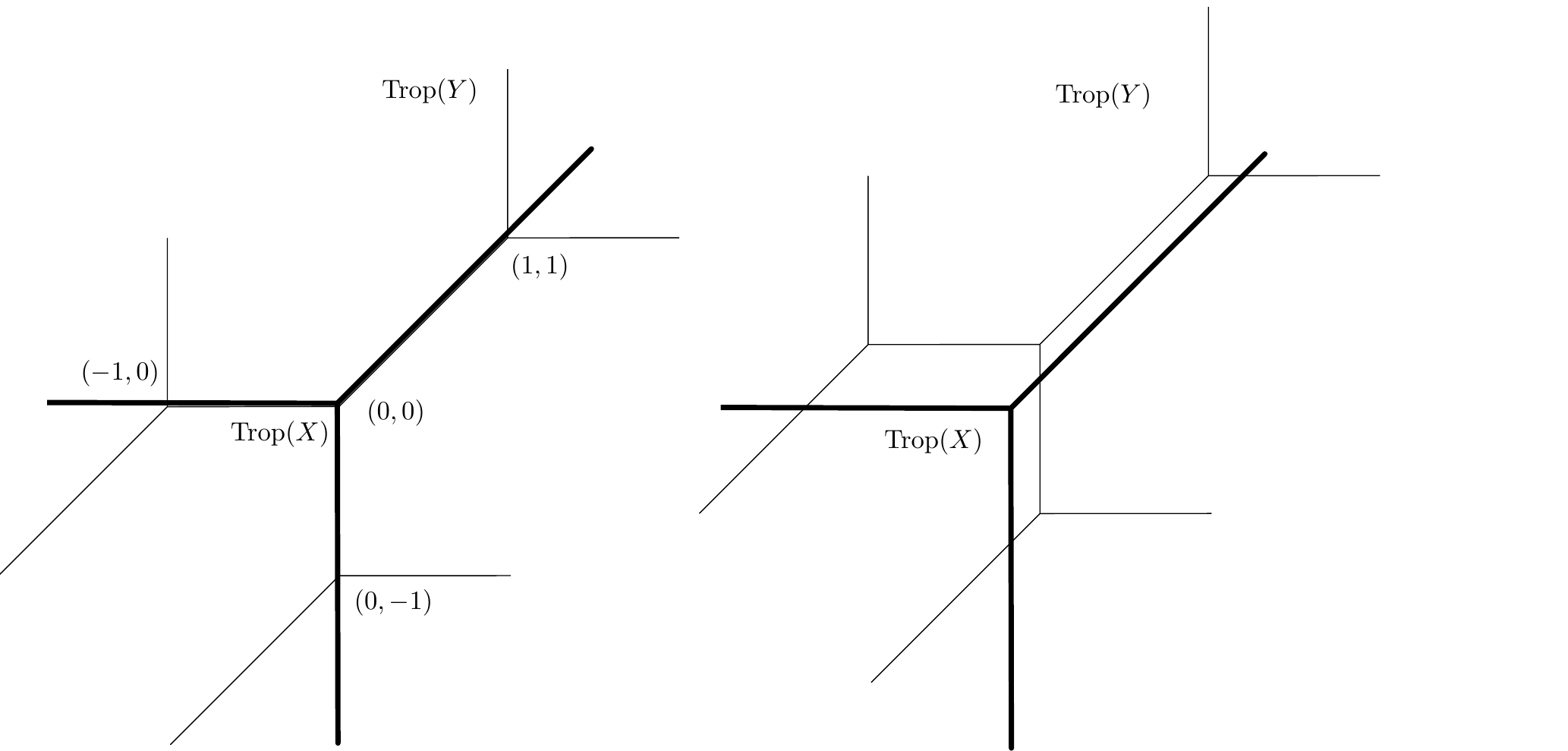}
\end{center}
\caption{$\Trop(X)$ and $\Trop(Y)$, before and after a small shift of $\Trop(Y)$.}
\label{cc}
\end{figure}

The stable tropical intersection consists of four points: $(-1,0)$, $(0,-1)$, $(1,1)$, and $(0,0)$.  The possible images of $\Trop(X\cap Y)$ must be linearly equivalent to these via a rational function equal to $0$ on the three exterior points.  This gives us intersection configurations of three possible types:
\bi
\item[(i)]$\{(-(p-r),0),(0,-p), (p,p),  (-r,0)\}$ where $0\leq r\leq p/2$;
\item[(ii)]  $\{(-p,0),(0,-(p-r)), (p,p),  (0,-r)\}$ where $0\leq r\leq p/2$; and
\item[(iii)] $\{(-p,0),(0,-p), (p-r,p-r),  (r,r)\}$ where $0\leq r\leq p/2$.
\ei
To achieve a type (i) configuration, set $f(x,y)=x+y+xy$ and $g(x,y)=(1+2t^{1-p+r})x+(1+t^{1-p})y+xy+t(x^2+y^2+1)$; if $r>0$, the $2$ can be omitted from the coefficient of $x$ in $g$. The Newton polygons of two polynomials, namely the resultants of $f$ and $g$ with respect to $x$ and with respect to $y$, show that $\Trop(X\cap Y)=\{(-(p-r),0),(0,-p), (p,p),  (-r,0)\}$.  Type (ii) and (iii) are achieved similarly, so Conjecture  \ref{main_conjecture} holds for this example.

For instance, if $f(x,y)=x+y+xy$ and $g(x,y)=(1+t^{1/2})x+(1+t^{1/3})y+xy+t(x^2+y^2+1)$, then $\Trop(X\cap Y)=\{(2/3,2/3),(0,-2/3),(-1/2,0),(-1/6,0)\}$.  The formal sum of these points is linearly equivalent to the stable intersection divisor, as illustrated by the rational function in Figure \ref{trophgraph}.  This is the tropicalization of the rational function $h(x,y)=\frac{(1+t^{1/2})x+(1+t^{1/3})y+xy+t(x^2+y^2+1)}{2x+4y+xy+t(x^2+y^2+1)}$, where $g'(x,y):=2x+4y+xy+t(x^2+y^2+1)$ was chosen so that $\Trop(X)\cap\Trop(V(g'))$ is the stable tropical intersection of $\Trop(X)$ and $\Trop(Y)$.

\begin{figure}[hbt]
\includegraphics[
height=1.5in
]
{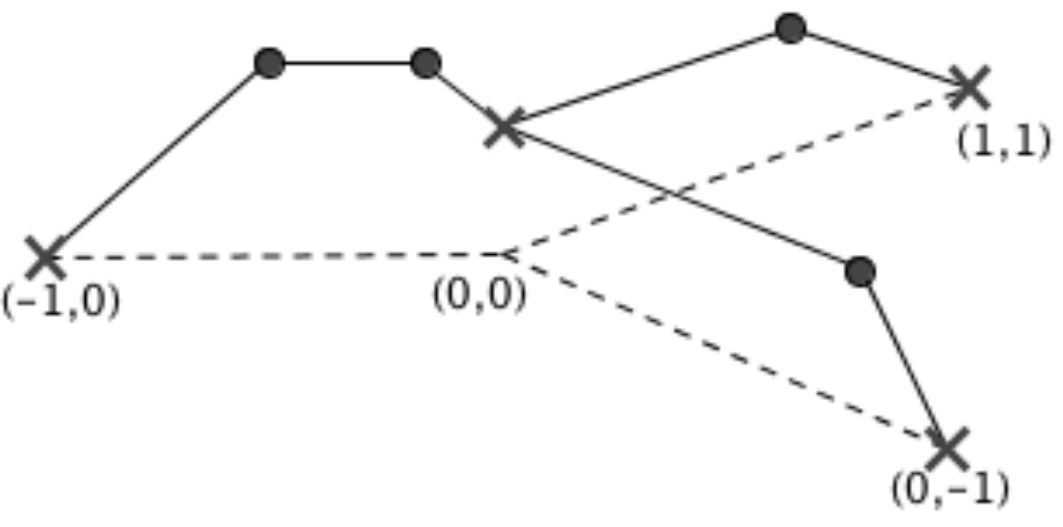}
\caption{The graph of $\trop(h)$ on $\Trop(X)\cap\Trop(Y)$, with zeros at the dots and poles at the x's.}
\label{trophgraph}
\end{figure}

\begin{figure}[hbt]
\begin{center}
\includegraphics[
height=2.1in
]
{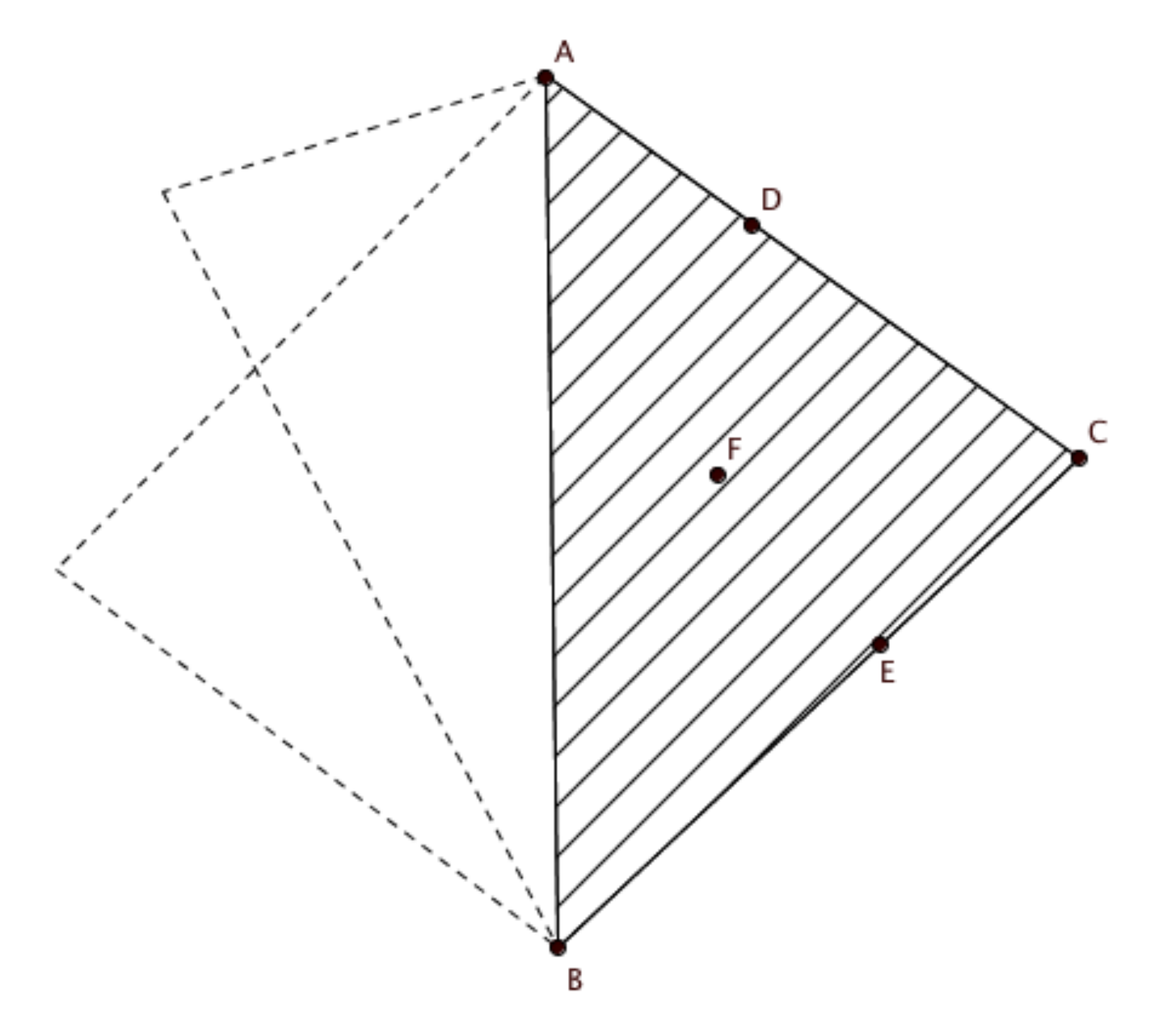}
\includegraphics[
height=2.1in
]
{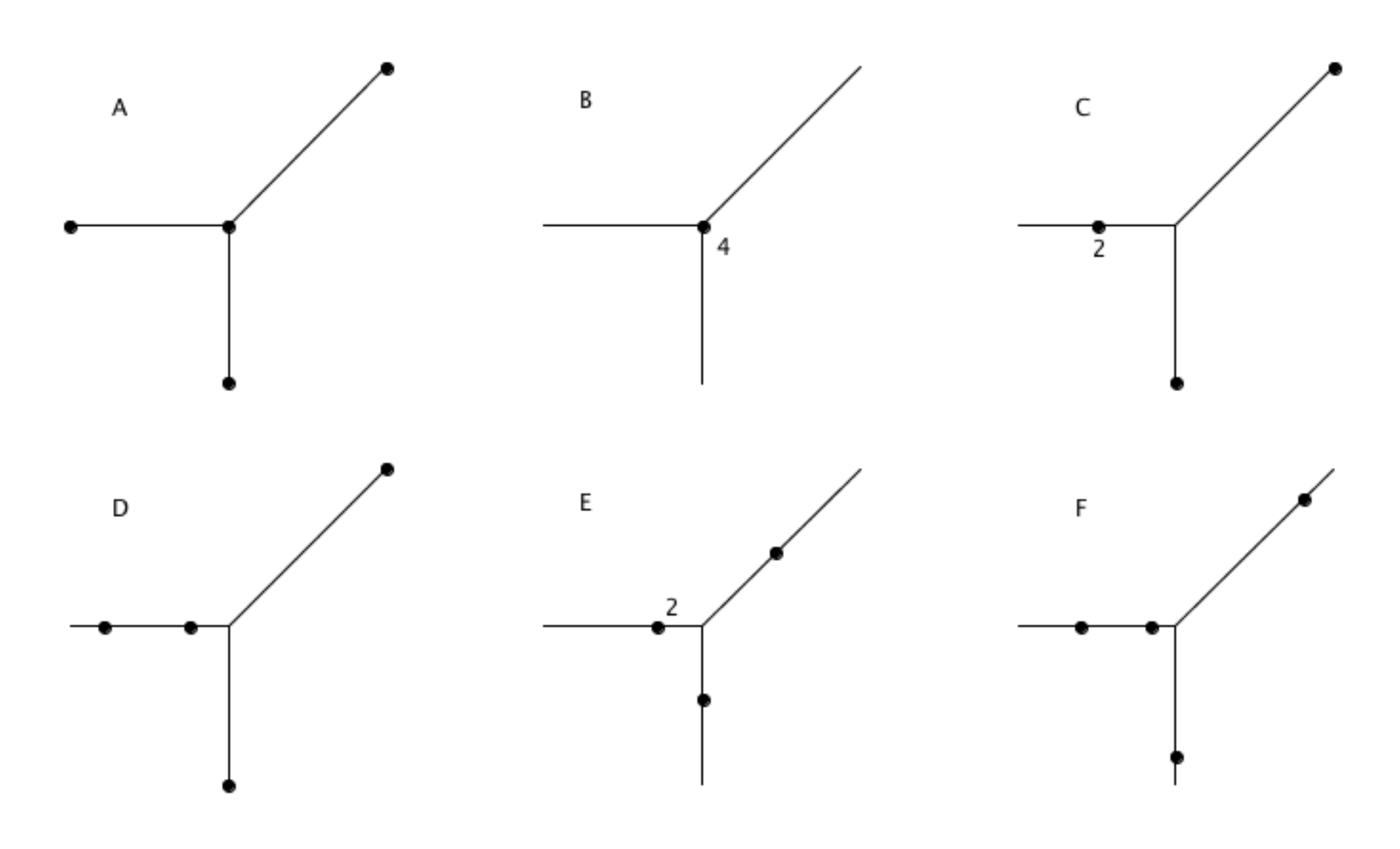}
\end{center}
\caption{The moduli space $M$ of intersection configurations, with six examples.}
\label{mscc}
\end{figure}

We can also consider this example in view of the outlined method of proof for Conjecture \ref{main_conjecture}.  Considering each intersection configuration as a point in $\R^8$ (natural for four points in $\R^2$), we obtain a moduli space $M$ for the possible tropical images of $X\cap Y$. The structure of this space is related to the notion of tropical convexity, as discussed in \cite{Lu}. As illustrated in Figure \ref{mscc},   $M$ consists of three triangles glued along one edge. The hope is that if vertices like $A$ and $C$ can be achieved, then it is possible to slide along the edge and achieve points like $D$.  For instance, if we set
$$f_A(x,y)=f_C(x,y)=f_{{AC},r}=x+y+xy$$
$$g_A=(1+t^0)x+4y+xy+t(x^2+y^2+1)$$
$$g_C=(1+t^{1/2})x+4y+xy+t(x^2+y^2+1)$$
$$g_{{AC},r}=(1+t^{r})x+4y+xy+t(x^2+y^2+1),$$
then $f_A$ and $g_A$ give configuration $A$, $f_C$ and $g_C$ give configuration $C$, and $f_{AC,r}$ and $g_{AC,r}$ give all configurations along the edge $AC$ as $r$ varies from $0$ to $\frac{1}{2}$.

\end{example}

\begin{example}\label{doubleline} Let $X$ and $Y$ be distinct lines defined by $f(x,y)=c_1+c_2x+c_3y$ and $g(x,y)=c_6+c_4x+c_5y$ with $\val(c_i)=0$ for all $i$.  These lines tropicalize to the same tropical line centered at the origin, with  stable tropical intersection equal to the single point $(0,0)$.  Any point on $\Trop(X)=\Trop(X)\cap\Trop(Y)$ is linearly equivalent to $(0,0)$ via a tropical rational function on $X$, so Theorem \ref{main_theorem} puts no restrictions on the image of $p=X\cap Y$ under tropicalization.  In keeping with Conjecture \ref{main_conjecture}, all possibilities can be achieved:

\bi  

\item[(i)] For $\trop( p)=(r,0)$, let $f(x,y)=1+x+y$, $g(x,y)=(1+t^r)+x+y$.

\item[(ii)] For $\trop( p)=(0,r)$, let $f(x,y)=1+x+y$, $g(x,y)=1+(1+t^r)x+y$.

\item[(iii)] For $\trop(p )=(-r,-r)$, let $f(x,y)=1+x+y$, $g(x,y)=1+x+(1+t^r)y$.

\ei

The point $(0,0)$ is also linearly equivalent to points at infinity, as witnessed by rational functions with constant slope $1$ on an entire infinite ray. Mapping $p$ ``to infinity'' means that $X$ and $Y$ cannot intersect in $(K^*)^2$, so we can choose equations for $X$ and $Y$ that give $p$ a coordinate equal to $0$, such as $x+y+1=0$ and $x+2y+1=0$.

\end{example}

\begin{example} Let $X$ and $Y$ be the curves defined by
$$f(x,y)=xy+t(c_1x+c_2y^2+c_3x^2y)$$
$$g(x,y)=xy+t(d_1x+d_2y^2+d_3x^2y) $$
respectively, where $\val(c_i)=\val(d_i)=0$ for all $i$.  This means $\Trop(X)$ and $\Trop(Y)$ are the same, and are as pictured in Figure \ref{figure:cubic_cubic}.

  \begin{figure}[hbt]
\begin{center}
\includegraphics[
height=1.4in
]
{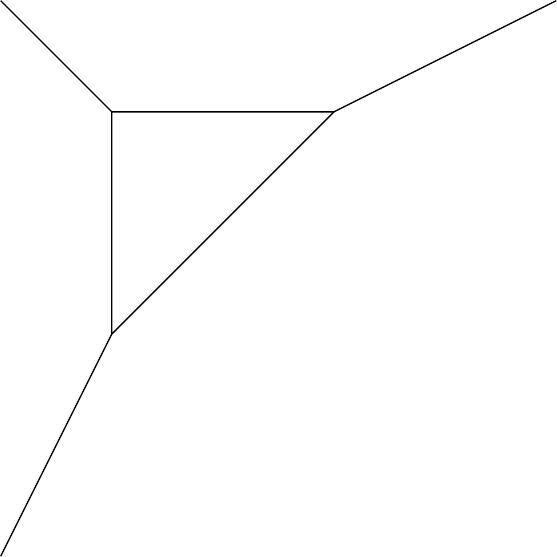}
\end{center}
\caption{ $\Trop(X)=\Trop(Y)=\Trop(X\cap Y)$, with the vertices of the triangle at $(-1,1)$, $(2,1)$, and $(-1,-2)$.}
\label{figure:cubic_cubic}
\end{figure}

The resultant of $f$ and $g$ with respect to the variable $y$ is
\begin{align*}
&t^4( c_2^2d_1^2 - 2c_1c_2d_1d_2 + c_1^2d_2^2)x^2+ t^2(c_1c_2 - c_2d_1 - c_1d_2 + d_1d_2)x^3
\\&+ t^3(-c_2c_3d_1 - c_1c_3d_2 + 2c_3d_1d_2 + 2c_1c_2d_3 -
c_2d_1d_3 - c_1d_2d_3)x^4 
\\&+t^4 (c_3^2d_1d_2 - c_2c_3d_1d_3 - c_1c_3d_2d_3 + c_1c_2d_3^2)x^5, 
\end{align*}
and the resultant of $f$ and $g$ with respect to the variable $x$ is
\begin{align*}
&t^4(c_2c_3d_1^2 - c_1c_3d_1d_2 - c_1c_2d_1d_3 + c_1^2d_2d_3)y^3 
\\&+t^3(2c_2c_3d_1 - c_1c_3d_2 - c_3d_1d_2 - c_1c_2d_3 -
c_2d_1d_3 + 2c_1d_2d_3)y^4 
\\&+t^2( c_2c_3 - c_3d_2 - c_2d_3 + d_2d_3)y^5
+t^4( c_3^2d_2^2 - 2c_2c_3d_2d_3 + c_2^2d_3^2)y^6.
\end{align*}

The stable tropical intersection consists of the three vertices of the triangle.  Let us consider possible configurations of the three intersection points that have all three intersection points lying on the triangle, rather than on the unbounded rays.  These are the configurations of zeros of rational functions   with poles precisely at the three vertices; let $h^{\trop}$ be such a function.  Label the vertices clockwise starting with $(-1,1)$ as $v_1$, $v_2$, $v_3$. Starting from $v_1$ and going clockwise, label the poles of $h^{\trop}$ as $w_1$, $w_2$, $w_3$.  Let $\delta_i$ denote the signed lattice distance between $v_i$ and $w_i$, with counterclockwise distance negative.  Then a necessary condition for the $w_i$'s to be the poles of $h^{\trop}$ is $\delta_1+\delta_2+\delta_3=0$; and in fact this condition is sufficient to guarantee the existence of such an $h^{\trop}$.  It follows that the $w_i$'s cannot be in all different or all the same line segment of triangle, as all different would have $\delta_1+\delta_2+\delta_3>0$ and all the same would have $\delta_1+\delta_2+\delta_3\neq0$.  Hence we need only show that each configuration with exactly two $w_i$'s on the same edge satisfying $\delta_1+\delta_2+\delta_3=0$ is achievable.

There are six cases to handle, since there are three choices for the edge with a pair of points and then two choices for the edge with the remaining point point.  We will focus on the case where $w_1$ and $w_2$ are on the edge connecting $v_1$ and $v_2$, and $w_3$ is on the edge connecting $v_2$ and $v_3$, as shown in Figure \ref{figure:triangle}.  Let $\delta_1=r$ and $\delta_2=-s$, where $r,s>0$, and $2-s\geq -1+r$.  It follows that $\delta_3=-(r-s)$, and that $r>s$ by the position of $w_3$.

  \begin{figure}[hbt]
\begin{center}
\includegraphics[
height=1.4in
]
{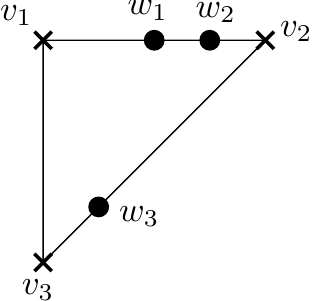}
\end{center}
\caption{The desired configuration of intersection points, where $\delta_1=r>0$, $\delta_2=-s<0$, and $\delta_3=-(r-s)<0$.}
\label{figure:triangle}
\end{figure}

To achieve the configuration specified by $r$ and $s$, set
$$c_1=3+t^r,c_2=3,c_3=1,d_1=3, d_2=d+2t^{r-s},d_3=2. $$
The valuations of the coefficients of the resultant polynomial with $x$ terms are $4+2(r-s)$ for $x^2$, $2+2r-3$ for $x^3$, $3+r-s$ for $x^4$, and $4$ for $x^5$.  It follows that the valuations of the $x$-coordinates are $2-s$, $-1+r$, and $-1-s+r$.  When coupled with rational function restrictions, this implies that the intersection points of $X$ and $Y$ tropicalize to $(-1+r,1)$, $(2-s,1)$, and $(-1-s+r, -2-s+r)$, which are indeed the points $w_1$, $w_2$, and $w_3$ we desired.

The five other cases with all three intersection points in the triangle are handled similarly, and the cases with one or more intersection point on an infinite ray are even simpler.

\end{example}

These examples provide not only a helpful check of Theorem \ref{main_theorem}, but also evidence that all possible intersection configurations can in fact be achieved.  Future work towards proving this might be of a Berkovich flavor, as in Sections \ref{section:background} and \ref{section:main_result}, or may have more to do with tropical modifications, as presented in Section \ref{section:modifications}.  Regardless of the approach, future investigations should not only look towards proving Conjecture \ref{main_conjecture}, but also towards algorithmically lifting tropical intersection configurations to curves yielding them.










\medskip

\end{document}